\newtheorem{theorem}{Theorem}
\newtheorem{conjecture}{Conjecture}
\newtheorem{lemma}{Lemma}
\newtheorem{claim}{Claim}
\newcommand{\keywords}[1]{\par\textbf{Keywords:} #1}
\def\Box{\raisebox{3pt}{\framebox{\hbox to 3pt{\vbox to 3pt{}}}}}
\newenvironment{proof}{\medskip \noindent{\sc Proof:}}{\quad$\Box$\par\medskip} 
\DeclareMathOperator{\opt}{opt}
\DeclareMathOperator{\ifcov}{IFCov}
\DeclareMathOperator{\Cov}{Cov}
\begin{document}

\title{Constructions for the optimal pebbling of grids}

\author[1,2]{Ervin Gy\H{o}ri\thanks{gyori.ervin@renyi.mta.hu}}
\author[3,4]{Gyula Y. Katona\thanks{kiskat@cs.bme.hu}}
\author[3]{L\'aszl\'o F. Papp\thanks{lazsa@cs.bme.hu}}
\affil[1]{Alfr\'ed R\'enyi Institute of Mathematics, Budapest, Hungary}
\affil[2]{Department of Mathematics, Central European University, Budapest, Hungary}
\affil[3]{Department of Computer Science and
Information Theory, Budapest University of Technology and Economics, Hungary}
\affil[4]{MTA-ELTE Numerical Analysis and Large Networks Research Group, Hungary}

\date{Received: date / Accepted: date}

\maketitle

\begin{abstract}
  In [\textit{Xue, Yerger: \emph{Optimal Pebbling on Grids}, Graphs and Combinatorics \textbf{(32)} no.~3}] 
the authors conjecture that if every vertex of an
  infinite square grid is reachable from a pebble distribution, then
  the covering ratio of this distribution is at most $3.25$. First we
  present such a distribution with covering ratio $3.5$, disproving
  the conjecture. The authors in the above paper 
also claim to prove that
  the covering ratio of any pebble distribution is at most
  $6.75$. The proof contains some errors. We present a
  few interesting pebble distributions that this proof does not seem
  to cover and highlight some other difficulties of this topic.

\end{abstract}
 \keywords{optimal pebbling, pebbling, grid graph}
\section{Introduction}
\label{intro}
Graph pebbling has its origin in number theory. It is a model for the
transportation of resources. Starting with a pebble distribution on
the vertices of a simple connected graph, a \emph{pebbling move}
removes two pebbles from a vertex and adds one pebble at an adjacent
vertex. We can think of the pebbles as fuel containers. Then the loss
of the pebble during a move is the cost of transportation. A vertex is
called \emph{reachable} if a pebble can be moved to that vertex using
pebbling moves. There are several questions we can ask about
pebbling. One of them is: How can we place the smallest number of
pebbles such that every vertex is reachable? The minimum number of pebbles in 
such a pebble distribution is called the \emph{optimal pebbling
  number} of the graph. The \emph{optimal covering ratio of a graph} is the number of vertices of the graph  divided by the optimal 
	pebbling number. Moreover, the \emph{covering ratio of an arbitrary distribution} is the number of vertices reachable from the distribution divided by the number of pebbles in the distribution.
	For a 
	comprehensive list of references for the extensive
literature see the survey papers
\cite{Hurlbert_survey1,Hurlbert_survey2,Hurlbert_survey3}.

In Section~\ref{sec:1} we show a pebble distribution which disproves a conjecture of Xue and Yerger. Section~\ref{sec2} and \ref{sec3} contain some interesting counterexamples for some lemmas stated in \cite{XY}. We also mention some phenomenons why we think that the proof of Theorem 8 of \cite{XY}  can not be corrected, and a proper proof requires a different approach. 

In the last section we introduce a new problem, called \emph{optimal integer fractional covering ratio}, where the tools introduced in \cite{XY} can be used. We give a lower and an upper bound on the optimal integer fractional covering ratio of large grids.

\section{Definitions}
In this section we summarize the definitions which the paper uses. We start with basic ones, which are well known in the area of pebbling, then continue with more complicated ones, which were introduced in \cite{XY}.

\subsection{Traditional pebbling}

A \emph {pebbling distribution} $D$ is a $V(G)\rightarrow \mathbb{N}$ function. If $D(v)\geq 2$ and $u$ and $v$ are adjacent vertices, then we can apply a $(v\rightarrow u)$ \emph{pebbling move}. It decreases $D(v)$ by two and increases $D(u)$ by one. A vertex $v$ is \emph{reachable} under $D$ if either $D(v)\geq 1$ or we can apply a sequence of pebbling moves such that the last one is an $(u\rightarrow v)$ move.
 
A distribution is \emph{solvable} if each vertex is reachable under it. We use $|D|$ for the \emph{size of distribution} $D$, which is the total number of pebbles placed on the graph.
A distribution is \emph{optimal} on graph $G$ if its size is minimal among all solvable distributions of $G$. \emph{The optimal pebbling number} is the size of an optimal distribution and it is denoted by $\pi_{\opt}(G)$.

The \emph{coverage} of distribution $D$ is the set of reachable vertices. We denote the size of this set by $\Cov(D)$. 

The \emph{covering ratio} of $D$ is defined as $\frac{\Cov(D)}{|D|}$.   

\subsection{Infinite graphs}
The infinite square grid is denoted by $G_{inf}$.
In \cite{XY} the authors talk about $G_{inf}$, they do not provide a proper definition for the covering ratio in the case when the distribution is infinite. In their reasoning they assume that the number of pebbles is finite, therefore we will assume the same. When solvability comes into play, we think about arbitrary large, but finite square grids, whose border's size is marginal compared to their total size. Therefore their covering ratio is well defined. We define the optimal covering ratio of $G_{inf}$ as the limit of larger and larger square grids's optimal covering ratios.

\begin{claim}
This limit exists.
\end{claim}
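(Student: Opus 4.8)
The plan is to work with the reciprocal quantity, the \emph{pebble density} $f(n) := \pi_{\opt}(G_n)/n^2$, where $G_n$ denotes the $n\times n$ grid; since by definition the optimal covering ratio of $G_n$ equals $1/f(n)$, it suffices to show that $f(n)$ converges to a finite, strictly positive limit. I would establish this by an approximate subadditivity (Fekete-style) argument, together with a crude uniform bound on the covering ratio to guarantee positivity of the limit.

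First I would prove the approximate subadditivity of $f$. Fix $n$, and for large $N$ write $N = qn + s$ with $q=\lfloor N/n\rfloor$ and $0\le s<n$. Tile a $qn\times qn$ corner of $G_N$ by $q^2$ disjoint copies of $G_n$, placing an optimal solvable distribution on each copy, and put a single pebble on each of the remaining at most $2Nn$ boundary vertices. Because reachability inside a subgrid implies reachability in the whole grid (every pebbling sequence available in a subgraph remains available in any supergraph), this yields a solvable distribution of $G_N$, whence
\begin{equation}
\pi_{\opt}(G_N)\le q^2\,\pi_{\opt}(G_n)+2Nn.
\end{equation}
Dividing by $N^2$ and using $q^2n^2\le N^2$ gives $f(N)\le f(n)+2n/N$, so $\limsup_{N\to\infty} f(N)\le f(n)$ for every fixed $n$. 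Taking the infimum over $n$ yields $\limsup_N f(N)\le \inf_n f(n)$, while trivially $f(N)\ge\inf_n f(n)$ gives $\liminf_N f(N)\ge \inf_n f(n)$; hence $\lim_N f(N)=\inf_n f(n)$ exists.

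It remains to check that this limit is strictly positive, so that $1/f(n)$ converges to a finite number rather than diverging. For this I would use the standard weight function: for a fixed target $t$, the quantity $\sum_v 2^{-d(v,t)}D(v)$ does not increase under any pebbling move, so a reachable $t$ satisfies $\sum_v 2^{-d(v,t)}D(v)\ge 1$. Summing this necessary condition over all reachable $t$ and exchanging the order of summation gives
\begin{equation}
\Cov(D)\le \sum_v D(v)\sum_t 2^{-d(v,t)}\le 9\,|D|,
\end{equation}
since in the grid $\sum_t 2^{-d(v,t)}=1+\sum_{d\ge1}4d\,2^{-d}=9$. Applying this to an optimal (hence solvable, so $\Cov(D)=n^2$) distribution of $G_n$ gives $\pi_{\opt}(G_n)\ge n^2/9$, i.e. $f(n)\ge 1/9$. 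Consequently $\inf_n f(n)\ge 1/9>0$, and the optimal covering ratios $1/f(n)$ converge to $1/\inf_n f(n)$, proving the claim.

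I expect the main obstacle to be the careful treatment of the boundary in the tiling step: one must confirm that the discarded border region really is of lower order ($O(Nn)$ against $N^2$) and that gluing the block distributions preserves solvability. The monotonicity of reachability under graph inclusion makes the latter routine, but it should be stated explicitly, as it is exactly the point where the infinite-versus-finite subtleties flagged earlier in the paper could otherwise creep in.
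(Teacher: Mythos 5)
Your proposal is correct and rests on the same core construction as the paper's proof: tiling the large grid by optimal distributions on $m\times m$ (resp.\ $n\times n$) blocks and placing single pebbles on the $O(Nn)$ leftover border vertices, then passing to the reciprocal density $\pi_{\opt}(G_{n\times n})/n^2$ — you merely package the estimate as Fekete-style approximate subadditivity ($\limsup f \le \inf f$), where the paper runs an explicit $\epsilon$-argument around $\liminf$. Your additional positivity step, $f(n)\ge 1/9$ via the weight function, is a sound (and welcome) supplement ruling out divergence of the covering ratios, a point the paper's proof leaves implicit.
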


\begin{proof}
We prove, that the reciprocal of the series is convergent. We denote the $n\times n$ grid by $G_{n\times n}$.

Let $\epsilon$ be an arbitrary positive real number and $A=\liminf_{n\rightarrow \infty} \frac{\pi_{\opt}(G_{n\times n})}{n^2}$. Choose $m$ large enough such that $\frac{6}{m}<\epsilon$, $\frac{\pi_{\opt}(G_{m\times m})}{m^2}-A\leq \frac{\epsilon}{2}$ and for every $n\geq m$ the inequality $A-\frac{\pi_{\opt}(G_{n\times n})}{n^2}<\epsilon$ holds. We show that $n\geq m^2$ implies $\left|A-\frac{\pi_{\opt}(G_{n\times n})}{n^2}\right|<\epsilon$.

Write $n$ as $km+r$ and partition $G_{n\times n}$ to $k^2$ piece of disjoint $G_{m\times m}$ and $r^2+2rkm$ remaining vertices. We use its optimal distribution on each $G_{m\times m}$ and place one pebble to each remaining vertex. In such a way we obtain a solvable distribution of $G_{n\times n}$. Therefore: 

$$-\epsilon<\frac{\pi_{\opt}(G_{n\times n})}{n^2}-A\leq \frac{k^2\pi_{\opt}(G_{m\times m})+r^2+2rkm}{k^2m^2+r^2+2rkm}-A\leq \frac{\pi_{\opt}(G_{m\times m})}{m^2}-A+\frac{3}{m}<\frac{\epsilon}{2}+\frac{\epsilon}{2}=\epsilon$$

 \end{proof}

We note, that there are several ways to define pebbling parameters for  infinite graphs by considering infinite distributions. Nevertheless, it is beyond the scope of this paper. 

\subsection{Combining distributions}

Assume that we have two distributions $D$ and $D'$. We say that these two distributions \emph{interact} at vertex $v$ if $v$ is reachable under both. Vertex $v$ is a \emph{boundary vertex} of $D$ if $v$ is reachable under $D$ but one of its neighbours is not.

It is a natural idea to unify two distributions $D$ and $D^*$ by placing $D(v)+D^*(v)$ pebbles at $v$, to create a bigger one. $D$ and $D^*$ are stronger together in the sense, that some vertices are not reachable under $D$ nor $D^*$, but they are reachable under the $D'$ which we get by combining them. This phenomenon requires the presence of interacting vertices. 

For example, if an interaction vertex is boundary in both $D$ and $D^*$ and one of its neighbours is not reachable under $D$ and $D^*$, then this neighbour is reachable under $D'$.

A \emph{unit} is a vertex having at least one pebble. A \emph{unit distribution} contains only one unit.
 
Using the combination method we can build any distribution from unit distributions. It often happens, that the coverage of a unit is disjoint from coverage of the rest of the distribution. So the unit and the rest do not share an interaction vertex and they can be handled separately. We say that these units are \emph{lonely}.

We can also ask that what is the difference between the covering ratios of $D$ and $D'$. Of course, it depends on $D^*$, but we would like to measure it. This motivates the definition of \emph{marginal covering ratio}, which is the following:


$$\frac{\Cov(D')-\Cov(D)}{|D'|-|D|}.$$

We can not compute the covering ratio of $D'$ if we know the covering ratio of $D$ and the marginal covering ratio. On the other hand, we can state upper bounds, which we are interested in.

\subsection{Fractional pebbling}
A variation of the pebbling problem if we allow fractional pebbles. This leads to the area of fractional pebbling, which is well studied in \cite{frac}. 

A \emph{continuous distribution} on $G$ is a $V(G)\rightarrow \mathbb{R}^+\cup\{0\}$ function. A \emph{continuous pebbling move} removes $t$ pebbles from a vertex and place  $t/2$ pebbles  to an adjacent vertex, where $t$ can be any positive real number.

The \emph{optimal fractional pebbling} number is the size of the smallest solvable continuous distribution. It can be calculated by solving a linear program and it is a lower bound on the optimal pebbling number.


Let $D$ be a continuous distribution. The \emph{weight function} of $D$, which is defined on the vertex set of $G$, is defined as: 
$$W_D(u)=\sum_{v\in V(G)}D(v)2^{-d(u,v)}.$$
It tell us that how many pebbles can be moved to a vertex under $D$ by continuous pebbling moves. It is a very useful tool, widely used by several authors in this topic.

In a solvable distribution the weight of each vertex is at least one. However, if we are considering optimal pebbling distributions, then several vertices have more weight than one. We can consider this extra weight as an excess, and try to calculate it. The sum of these values gives an estimate on the difference between the fractional and the traditional optimal pebbling numbers.    

So we define the \emph{excess weight} function as:
$$\widehat{W}_D(u)=\begin{cases}W_D(u)-1 &\text{if } W_D(u)>1,\\
W_D(u) &\text{if } W_D(u)\leq 1.
\end{cases}$$

With the help of this function, we can give a fractional generalization of covering ratio. To construct the numerator we count the number of reachable vertices and also add the weight of not reachable vertices. 
The \emph{covering ratio ceiling} of $D$ is:

$$\frac{\sum_{v\in V(G)}W_D(v)-\sum_{v\in V(G)}\widehat{W}_D(v)}{\sum_{v\in V(G)}D(v)}.$$

Like the marginal covering ratio we also define a quantity which measures in some way the change of covering ratio ceiling in case of adding some extra pebbles. So let $D$ and $D'$ be distributions, such that $D(v)\leq D'(v)$ for each vertex. The \emph{marginal covering ratio ceiling} of these distributions is defined by

$$\frac{\left(\sum_{v\in V(G)}W_D'(v)-\sum_{v\in V(G)}\widehat{W}_D'(v)\right)-\left(\sum_{v\in V(G)}W_D(v)-\sum_{v\in V(G)}\widehat{W}_D(v)\right)}       {\sum_{v\in V(G)}D'(v)-\sum_{v\in V(G)}D(v)}.$$


\section{Lower bound for the optimal covering ratio of the grid}
\label{sec:1}

Conjecture 2 in \cite{XY} states that if every vertex of an infinite
square grid is reachable from a pebble distribution, then the covering
ratio of this distribution is at most $3.25$. 

We present a sequence of distributions on big grids whose covering ratios converge to $3.5$, disproving the conjecture. 
Repeating periodically the optimal distribution of $G_{n\times n}$ results a solvable distribution on the infinite grid. Therefore considering real infinite graphs and distributions can not decrease the optimal covering ratio, if it is defined intuitively.

A
distribution is shown on Figure~\ref{fig:27distr},  units consisting of four pebbles
are placed to every other vertex on every 7th diagonal. It is easy to
calculate that the covering ratios of such distributions tends to
$7/2=3.5$. The arrows and the shaded areas on the figure indicate how
to reach all vertices of the grid from this distribution. We
conjecture that this is best possible.

  \begin{figure}
    \centering
    \begin{tikzpicture}[scale=.45]
\input{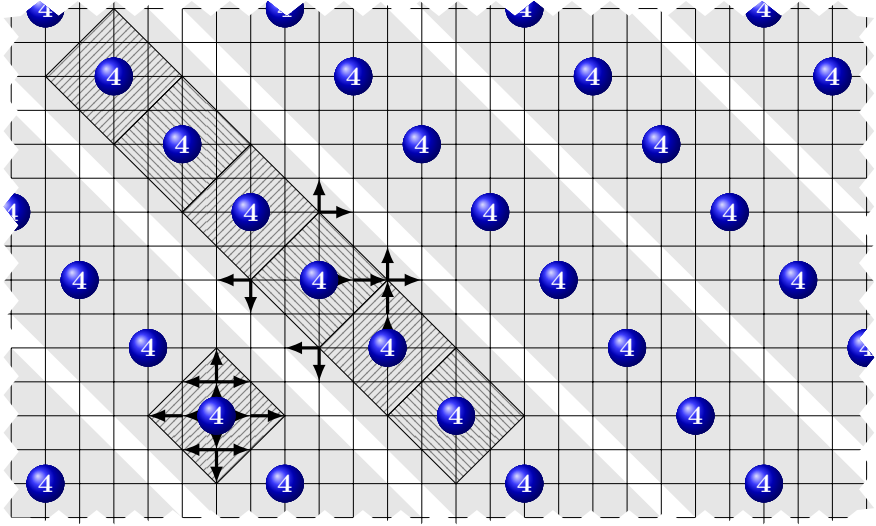}

 \clip[decorate, decoration={zigzag,segment length=5mm,amplitude=1mm}](-1,-1) rectangle (24,14);
\foreach \x in {0,...,25} 
\foreach \y in {0,...,14}
\coordinate (X\x_Y\y) at (\x,\y);

\foreach \i in {-18,-11,-4,3,10,17,24}
\fill[color=black!10]  (\i,15) -- +(6,0) -- +(22,-16) -- +(16,-16);
\draw[pattern=north east lines, pattern color=black!50] (7,2)  -- +(-2,2) -- +(-4,0) -- +(-2,-2) --  (7,2);

\foreach \i in {1,...,3}{
\draw[pattern=north west lines, pattern color=black!50] (4*\i,16-4*\i)  -- +(2,-2) -- +(0,-4) -- +(-2,-2)  --  (4*\i,16-4*\i);
\draw[pattern=north east lines, pattern color=black!50] (4*\i,16-4*\i)  -- +(-2,2) -- +(-4,0) -- +(-2,-2) --  (4*\i,16-4*\i);
}
\draw [very thin, step=1cm] (-1.5,-1.5) grid (25.5,14.5);


\foreach \s/\t in {
X5_Y2/X5_Y3,
X5_Y2/X6_Y2,
X5_Y2/X4_Y2,
X5_Y2/X5_Y1,
X5_Y3/X5_Y4,
X5_Y3/X6_Y3,
X5_Y3/X4_Y3,
X5_Y1/X5_Y0,
X5_Y1/X6_Y1,
X5_Y1/X4_Y1,
X4_Y2/X3_Y2,
X6_Y2/X7_Y2,
X8_Y8/X8_Y9,
X8_Y8/X9_Y8,
X10_Y6/X11_Y6,
X10_Y6/X10_Y7,
X6_Y6/X6_Y5,
X6_Y6/X5_Y6,
X8_Y4/X8_Y3,
X8_Y4/X7_Y4,
X10_Y4/X10_Y5,
X10_Y5/X10_Y6,
X8_Y6/X9_Y6,
X9_Y6/X10_Y6}
\draw [nyil] (\s) -- (\t);
\foreach \x in {-1,...,7} 
	\foreach \y in  {-3,...,3}  {
	\pgfmathtruncatemacro{\i}{(2*\x+7*\y)}
	\pgfmathtruncatemacro{\j}{(14-2*\x)}
\ifnum \i>-2
\ifnum \i<26 
\node [pebble] (N_\x_\y) at (\i,\j) {$\mathbf{4}$}
\fi
\fi;

}

\foreach \p in {0,...,5} {
\pgfmathtruncatemacro{\q}{(\p+1)}
}
\end{tikzpicture}
    \caption{Pebble distribution of the grid with covering ratio $3.5$.}
    \label{fig:27distr}
  \end{figure}

\begin{conjecture}
If every vertex of an infinite
square grid is reachable from a pebble distribution, then the covering
ratio of this distribution is at most $3.5$.
\end{conjecture}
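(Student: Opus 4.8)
The plan is to recast the conjecture as an asymptotic density lower bound and then attack it with an argument that genuinely uses the discreteness of pebbling moves. Since the optimal covering ratio of $G_{inf}$ is the limit of the finite ratios $n^2/\pi_{\opt}(G_{n\times n})$, and since (by the tiling argument already used in the proof of the Claim above) a periodic solvable distribution realizes this limiting ratio, it suffices to show that every solvable distribution $D$ on a large grid satisfies $|D|\geq \frac{2}{7}\,\Cov(D)=\frac{2}{7}N$, where $N$ is the number of vertices; equivalently, the pebble density of any solvable distribution is at least $2/7$. First I would fix a periodic extremal candidate and reduce the whole problem to a statement about a single fundamental domain, so that boundary effects vanish in the limit.

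The natural first tool is the weight function. As recalled earlier, reachability of $u$ forces $W_D(u)=\sum_v D(v)2^{-d(u,v)}\geq 1$, and a direct computation shows that one pebble contributes total weight $1+4\sum_{k\geq 1}k2^{-k}=9$ to the grid (up to negligible boundary terms). Summing $W_D(u)\geq 1$ over all $u$ therefore gives $N\leq \sum_u W_D(u)\leq 9|D|$, i.e. covering ratio at most $9$. This is hopeless by itself: the value $9$ is exactly what the uniform continuous distribution $D\equiv 1/9$ attains, so it is the \emph{fractional} optimal covering ratio, and the weight inequality cannot see the difference between fractions and integers. One checks likewise that for a solvable $D$ the covering ratio ceiling collapses to $N/|D|$ (here $\widehat{W}_D(v)=W_D(v)-1$ everywhere), so the refined weight machinery of the previous section, applied naively, only restates the target inequality
\[\sum_{v}\bigl(W_D(v)-1\bigr)\ \geq\ \frac{11}{2}\,|D|\]
without supplying new information. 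The entire gap from $9$ down to the conjectured $3.5$ is an integrality gap, and this is where the real work must go.

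To capture integrality I would build $D$ from units and analyse coverage through the combining framework. The structural facts to establish first are: (i) a lone unit of four pebbles reaches exactly the ball of radius two, that is $13$ vertices, for a ratio $13/4=3.25$, and every other isolated cluster (one, two, eight, sixteen pebbles, \dots) is strictly less efficient; and (ii) the only way to beat $3.25$ is through \emph{interaction}, when a vertex unreachable from any single cluster becomes reachable because two boundary clusters cooperate. The plan is then a discharging scheme: every reachable vertex sends one unit of charge to the cluster(s) reaching it, splitting the charge equally among the clusters that genuinely cooperate at that vertex, and one shows that no pebble receives more than $7/2$ units. The extremal $7$-diagonal construction, in which each four-pebble unit is ultimately responsible for $14=4\cdot 3.5$ vertices (its own $13$ plus one gained through interaction), must be the equality case, so the discharging rules have to be calibrated to be tight precisely there.

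The main obstacle, and the reason a different approach than \cite{XY} is needed, is the global nature of interaction. A vertex that no single cluster can reach may be rescued only through the cooperation of several clusters, and such cooperations can chain across long distances, so the charge a given pebble receives is not a function of its bounded neighbourhood; any argument that audits one cluster at a time (as the flawed proof does) is bound to miss or double-count these shared gains. I expect the crux to be proving the per-pebble bound $7/2$ uniformly in the presence of arbitrarily intricate interaction patterns, that is, showing that interaction adds at most one extra covered vertex per four pebbles on average. A promising route is to abandon the local audit in favour of a single global potential combining a weight-type term with an explicit correction for the half-mass lost at each pebbling move, calibrated so that equality holds exactly at the conjectured optimum. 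Constructing such a potential — tight on the extremal configuration yet valid for every solvable distribution — is the part I expect to be genuinely hard, and it is the reason the statement remains a conjecture.
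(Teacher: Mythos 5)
This statement is a \emph{conjecture} in the paper: the authors prove only the matching lower bound, by exhibiting the distribution of Figure~\ref{fig:27distr} (units of four pebbles on every other vertex of every seventh diagonal) whose covering ratio tends to $7/2$, and they explicitly leave the upper bound open (their forthcoming work \cite{GKPT} claims only $6.5$, and they argue at length that the approach of \cite{XY} cannot be repaired). So there is no paper proof to compare against, and your text, read as a proof, has a genuine gap --- one you candidly concede in your final sentence. To give you credit where due: your first two paragraphs are correct. The reduction to the density statement $|D|\geq\frac{2}{7}N$, the total weight $1+4\sum_{k\geq1}k2^{-k}=9$ of a single pebble, the resulting fractional bound of $9$, and the equivalent reformulation $\sum_{v}\bigl(W_D(v)-1\bigr)\geq\frac{11}{2}|D|$ all check out, and your diagnosis that the whole difficulty is an integrality gap is precisely corroborated by the paper's Theorem~\ref{tetel}: since $\ifcov(G_{n\times n})\geq 7-\varepsilon$, no excess-weight accounting of the kind in Lemma~\ref{lem2} can ever push the upper bound below $7$, let alone to $3.5$. (One small slip: the covering ratio ceiling of a solvable $D$ equals $N/|D|$ only when $W_D>1$ strictly everywhere; at a vertex with $W_D=1$ the paper's definition gives $\widehat{W}_D=W_D$ and the vertex contributes $0$ to the numerator.)

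The unproven core is your third paragraph, and it fails in two concrete ways. First, the cluster dichotomy (i)--(ii) is either trivial or false depending on what a ``cluster'' is. If a cluster is a pile on a single vertex, then (i) is an easy computation ($13/4$ for four pebbles, smaller ratios for $1,2,8,16,\dots$), but then every multi-unit configuration counts as ``interaction,'' and the per-pebble charge bound of $7/2$ in (ii) \emph{is} the conjecture, not a lemma toward it. If instead a cluster is an arbitrary isolated finite sub-distribution (say, a maximal component under the paper's interaction relation), then (i) is false: a large finite piece of the Figure~\ref{fig:27distr} distribution is a single interaction-connected cluster with covering ratio arbitrarily close to $3.5>3.25$. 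Second, the discharging scheme is named but never constructed: no rules are specified, and --- as you yourself observe --- cooperation can chain across unbounded distances, so the charge received by a pebble is not determined by any bounded neighbourhood; this is exactly the phenomenon (illustrated by the paper's Figures~\ref{fig:ell1} and~\ref{fig:ell2}, where adding one small unit creates unboundedly many newly reachable vertices) that defeats the local, one-unit-at-a-time audit of \cite{XY}. The ``global potential'' of your last paragraph is likewise a wish, not an argument. In short, everything actually proven in your proposal stops at the fractional bound $9$; the descent from $7$ to $3.5$ remains entirely open, which is consistent with the paper leaving the statement as Conjecture~1.
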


\section{Comments on the upper bound for the optimal covering ratio
  of the grid}\label{sec2}

In \cite[Section 6]{XY} the authors claim to prove that the optimal
covering ratio of the grid is at most $6.75$ (see \cite[Theorem
8]{XY}).  The proof contains some errors. Although some
of these errors may be corrected somehow, in our opinion some others
cannot be corrected. We do not see how to complete the proof, but we
believe that the statement is true. In forthcoming paper \cite{GKPT}
we are to prove a better bound: the covering ratio is at most
$6.5$. In the rest of this note we point out some errors in the proof
and show some interesting pebble distributions that highlight the
difficulties of this problem.

First we summarize the above mentioned proof. It is an inductive proof
on the number of units contained in the distribution. First, as the
base case, it is shown that the theorem holds for one unit. Now assume
that it also holds for any distribution containing $n$ units. Consider
a distribution of $n+1$ units, remove an arbitrary unit, apply the
inductive hypothesis for the remaining units. Depending on the
position of the removed unit and the remaining distribution apply
\cite[Lemma 12]{XY} or \cite[Lemma 14]{XY} to complete the
proof. (In the original paper in Section 6 all
reference to the lemmas are shifted by one, so any reference to Lemma
$i$ should be to Lemma $i-1$.)

\subsection{Comments on \cite[Lemma 12]{XY}}
\label{sec:2.1}

The lemma states: \textit{For initial distribution $D$ and unit $U$,
  if only the boundary vertices of $U$ are reachable via pebbles from
  $D$, then the marginal covering ratio of a unit on $G_{inf}$ is at
  most $4.25$.}

We present several counterexamples to this lemma. 
\begin{figure}
    \centering
    {\begin{tikzpicture}[scale=.8]
\input{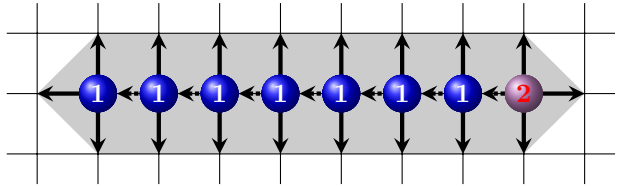}
\fill [color=black!20] (-1,2) node (v1) {} -- (0,3) node (v2) {} -- (7,3) -- (8,2) -- (7,1) -- (0,1) -- (v1);
\draw [thin, step=1cm] (-1.5,.5) grid (8.5,3.5);
\draw[->,ultra thick] (0,2) -- (-1,2);
\draw[->,ultra thick] (7,2) -- (7,3);
\draw[->,ultra thick] (7,2) -- (7,1);
\draw[->,ultra thick] (7,2) -- (8,2);
\foreach \p in {0,...,6} {
\draw[->,ultra thick] (\p,2) -- (\p,3);
\draw[->,ultra thick] (\p,2) -- (\p,1);
\node [pebble] (N_\p) at (\p,2) {$\mathbf{1}$};}
\node [pebble2] (plus) at (7,2) {$\mathbf{2}$};
\foreach \p in {0,...,5} {
\pgfmathtruncatemacro{\q}{(\p+1)}
\draw [<-, densely dotted, ultra thick]   (N_\p) -- (N_\q);
}
\draw [->,  densely dotted, ultra thick]   (plus) -- (N_6);
 
\end{tikzpicture}}
    \caption{Distribution with large marginal covering ratio.}
    \label{fig:ell1}
  \end{figure}
  In the first example (see Fig. \ref{fig:ell1}) let $D$ be the
  distribution consisting units of size 1 in a horizontal row. The
  covering ratio of $D$ is clearly $1$. Now let $U$ be a unit of size
  $2$ placed at the end of this row. Only a boundary vertex of $U$ is
  reachable from $D$ (i.e. a unit of $D$ is on the boundary of $U$),
  so the conditions of the lemma are satisfied. However after adding
  $U$, all vertices in the shaded area become reachable. Since the size
  of $U$ is constant (it is $2$), the marginal covering ratio depends
  on the size of $D$. It can be arbitrary large even if we consider
  finite $D$ distributions, and it can be infinite if $D$ is made
  infinitely large.

\begin{figure}
    \centering
    {\begin{tikzpicture}[scale=.8]
\input{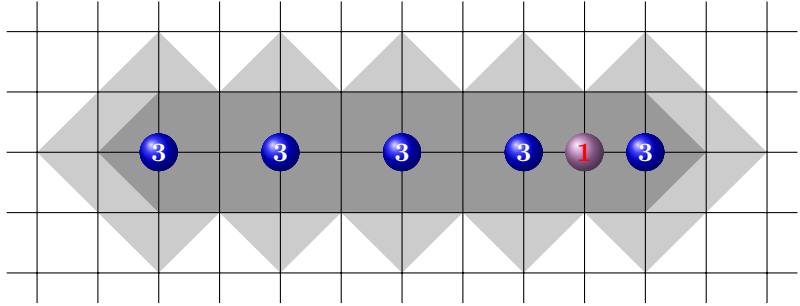}\
\foreach \r in {-2,0,2,4,6}
\fill [color=black!20] (\r,2) node (R_\r){} -- +(2,2) -- +(4,0) -- +(2,-2)-- (R_\r);

\fill [color=black!40] (-1,2) node (v1) {} -- (0,3) node (v2) {} -- (8,3) -- (9,2) -- (8,1) -- (0,1) -- (v1);
\draw [thin, step=1cm] (-2.5,-.5) grid (10.5,4.5);

\foreach \p in {0,2,4,6,8} {
\node [pebble] (N_\p) at (\p,2) {$\mathbf{3}$};
}
\node [pebble2] (plus) at (7,2) {$\mathbf{1}$};

\foreach \p in {0,...,5} {
\pgfmathtruncatemacro{\q}{(\p+1)}
}
 
\end{tikzpicture}}
    \caption{Distribution with large marginal covering ratio.}
    \label{fig:ell2}
  \end{figure}

  Another example is shown on Fig.  \ref{fig:ell2}, where a similar
  problem appears. Adding a unit of size $1$ can increase the number
  of newly reachable vertices by an arbitrary large amount.

The proof presented in \cite{XY} works only for the following weaker statement:
\begin{lemma}\label{newclaim}
  Let $D$ be a distribution and $U$ be a unit such that only the
  boundary vertices of $D$ and $U$ interact. Assume, moreover, that if
  $D$ has a unit of size one, then $U$ also has size one. Then, the
  marginal covering ratio of $U$ on $G_{{inf}}$ is at most $4.25$.
\end{lemma}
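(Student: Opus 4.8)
The plan is to follow the inductive skeleton of Xue and Yerger, but to run their single‑unit estimate under the two extra hypotheses that make it correct. Write $k=|U|$ for the size of the added unit, placed at a vertex $u$, and set $r=\lfloor \log_2 k\rfloor$, so that $U$ by itself reaches exactly the diamond $B(u,r)$ of radius $r$ and $2^r\le k<2^{r+1}$. Since the marginal covering ratio equals $(\Cov(D')-\Cov(D))/k$, it suffices to show that adding $U$ creates at most $\tfrac{17}{4}k$ newly reachable vertices, where \emph{newly reachable} means reachable under $D'=D+U$ but not under $D$.

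First I would exploit the boundary hypothesis to pin down the ``free'' part of the new coverage. A vertex of $B(u,r)$ at distance at most $r-1$ from $u$ is an interior vertex of the coverage of $U$; were it reachable under $D$ it would be an interaction vertex that is not on the boundary of $U$, contradicting the hypothesis. Hence the whole open ball $B(u,r-1)$, of size $2r^2-2r+1$, is disjoint from the coverage of $D$ and is newly reachable. Together with the sphere at distance exactly $r$ (at most $4r$ further vertices) this accounts for at most $|B(u,r)|=2r^2+2r+1$ newly reachable vertices inside the coverage of $U$. It remains to bound the number $X$ of newly reachable vertices lying outside $B(u,r)$.

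Bounding $X$ is the heart of the matter and the step I expect to be the main obstacle. Such a vertex is reachable only by routing pebbles of $U$ out to a boundary vertex $w$ of $U$ (distance exactly $r$) and combining them there with what $D$ delivers; by the symmetric boundary hypothesis every such interaction vertex $w$ is simultaneously a boundary vertex of $D$, so the combination can only push coverage one step past the common frontier rather than into the interior of the coverage of $D$. Because $k<2^{r+1}$, the unit $U$ can deliver a full pebble to at most one boundary vertex (it costs $2^r$ pebbles to reach distance $r$), and the surplus $k-2^r$ limits how many further frontier vertices can be boosted; this is what I would turn into a numerical bound on $X$. The second hypothesis enters precisely here: if $D$ contained a unit of size one on the frontier and $U$ had size at least two, a single delivered pebble could ignite a cascade through a chain of size‑one units of $D$ (the phenomenon of Figure~\ref{fig:ell1}), making $X$ unbounded, so forbidding this case is exactly what keeps the frontier extension local.

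Finally I would assemble the pieces: newly reachable $\le (2r^2+2r+1)+X$, and I would check that the resulting quantity is at most $\tfrac{17}{4}k$. The binding instances are $k=2^r$, where the budget $\tfrac{17}{4}2^r-(2r^2+2r+1)$ is smallest, so the verification reduces to confirming, for each small $r$, that the frontier extension $X$ stays within this budget (for example $X\le 4$ when $k=4$, $r=2$), the core ratio $|B(u,r)|/k$ itself being maximized at $3.25$ for $k=4$. The degenerate case $r=0$ (that is, $k=1$) I would treat separately: here $B(u,r-1)$ is empty, $U$ contributes no interior, and the only new coverage comes from adding one pebble at a boundary vertex of $D$, which can make reachable at most the unreached neighbours of $u$, giving at most $4$ new vertices and ratio at most $4<4.25$; the size‑one hypothesis guarantees we are genuinely in this regime.
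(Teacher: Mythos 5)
First, a fact you could not have known but which frames this review: the paper contains \emph{no} proof of this lemma at all --- it is stated precisely as the weaker assertion that Xue and Yerger's argument for their Lemma~12 actually establishes, and the authors defer to that proof. So your attempt must stand on its own, and it does not. Your skeleton is plausible and matches how the constant $17/4$ arises (the ball $B(u,r)$ of $2r^2+2r+1$ vertices with interior disjoint from $\Cov(D)$, a frontier-extension term $X$, binding case $k=4$, $r=2$ with $13/4=3.25$ plus a budget of $4$ for $X$). But the step you yourself flag as the main obstacle --- bounding $X$ --- is where the entire proof lives, and the heuristic you offer for it (``the combination can only push coverage one step past the common frontier'') is false as stated. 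What is true, and worth crediting, is that since an interaction vertex $w$ is a boundary vertex of $D$, the distribution $D$ can deliver at most one pebble to $w$ (two would cover all neighbours of $w$). But the single pebble pushed onward from $w$ can land on a covered vertex $z$, pair there with one more pebble that $D$ delivers to $z$ using pebbles \emph{disjoint} from those used at $w$, be pushed again, and so on: such a relay can emerge at an uncovered vertex far from $w$, and each choice of terminus is a distinct newly reachable vertex. This is exactly the cascade of Figure~\ref{fig:ell1} resurfacing; ruling out long relays when all units of $D$ have size at least two requires showing that simultaneous single-pebble deliveries to adjacent frontier vertices force overlapping resources, and nothing in your proposal supplies that. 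A further slip: ``$U$ can deliver a full pebble to at most one boundary vertex'' holds only per move sequence, while reachability quantifies over all sequences, so the count for $X$ must range over all (up to $4r$) interaction vertices; your asserted check ``$X\le 4$ when $k=4$'' is therefore not even set up correctly, let alone proven.

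The $k=1$ case is not merely incomplete but wrong as argued. Take $D$ with two pebbles at $(0,0)$ and one pebble at each of $(2,0),(3,0),\dots,(n,0)$, and let $U$ be a single pebble at $(1,0)$. Then $\Cov(U)=\{(1,0)\}$, the only interaction vertex is $(1,0)$, and it is a boundary vertex of both $U$ and $D$ (after the one possible move of $D$ no vertex carries two pebbles, so $(1,1)$ is unreachable under $D$); moreover $D$ has units of size one and $U$ has size one, so every hypothesis you invoke is satisfied. Yet under $D+U$ the two pebbles at $(1,0)$ ignite a relay through the chain of size-one units: terminating it at $(i,0)$ reaches $(i,1)$ and $(i,-1)$ for every $1\le i\le n$, plus $(n+1,0)$ --- roughly $2n+1$ newly reachable vertices for one added pebble. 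So your claim that the new coverage is ``at most the unreached neighbours of $u$'' fails, and the size-one hypothesis does not by itself localize the $k=1$ case; indeed, unless the hypotheses are read more restrictively than you state them (and arguably more restrictively than the lemma itself states them), this configuration shows the frontier cascade survives them. A correct proof needs two genuine ingredients absent here: a resource-disjointness argument killing relays when all units of $D$ have size at least two, and a separate idea taming chains of size-one units of $D$ in the $k=1$ regime.
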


\subsection{Comments on the inductive step}

In the inductive step, we assume that there is a distribution $D$ with
covering ratio at most $6.75$. Now we add a unit $U$. The
authors do not explain this step in detail, but we assume that their
intention is to apply \cite[Lemma 12]{XY} if \textit{``only the
  boundary vertices of $U$ are reachable via pebbles from $D$''}, and
apply \cite[Lemma 14]{XY} if \textit{``the unit interacts not only on
  the boundary vertices''}.

The examples in the previous subsection show that in some cases, when
some units of size 1 are involved, \cite[Lemma 12]{XY} cannot be
used. We think that the authors intended to handle this problem with
the following sentence in the proof of \cite[Lemma 12]{XY}:
\textit{``We assume that $D$ does not contain lonely units with one
  pebble because if $D$ contains those units, we can remove them first
  and add them after $U$ has been added.''} This suggests that in the
inductive step one should be more careful how to select the unit to be
removed, remove the lonely units first. The above
example suggests that this does not work. On the other hand, it looks
promising to always remove a unit which is on the ``boundary'' of $D$,
but now the boundary is understood differently, something like the
``convex hull''. However, this approach does not look easy.

Now let us consider the case when \cite[Lemma 14]{XY} is applied. (The
lemma in fact states, that the marginal covering ratio ceiling in this
case is at most $6$, however, the proof gives $6.75$, so this is
clearly a typo). We give an example, when this fails to prove the
inductive step: Two units of size $2$ on adjacent vertices of the
grid. In this case one of the units is $D$ the other is $U$, the
interaction happens not only on the boundaries. The covering ratio of
$D$ is $2.5<6.75$, so the inductive hypothesis holds. Now \cite[Lemma
14]{XY} implies that the marginal covering ratio ceiling of $U$ is at
most $6.75$. These facts do not imply that the covering
ratio of $D\cup U$ is at most $6.75$. (Of course, the covering ratio
is in fact $2<6.75$, just the proof does not imply this.)

The covering ratio is $\Cov(D)/|D|$. However, in the definition of the
marginal covering ratio ceiling neither $\Cov(D)$ nor, more
importantly, $\Cov(D\cup U)$ appear, so it seems impossible that
these two inequalities would imply anything useful for $\Cov(D\cup U)$.
We suspect that the intention of the authors was to say that if the
marginal covering ratio \textit{ceiling} of $D$ is at most $6.75$ and
the marginal covering ratio ceiling of the pair $(D,U)$ is at most
$6.75$, then the covering ratio \textit{ceiling} of $D\cup U$
is at most $6.75$. This implication is correct, but then we do not see
how to obtain the desired bound for the covering ratio. Of course,
since the covering ratio ceiling is an upper bound for the covering
ratio, it would be enough to prove with the previous argument that the
covering ratio ceiling is at most $6.75$. The above
argument does not give this. The covering ratio ceiling of $D\cup U$
is in fact $7.25$, implying only that the covering ratio is at most
$7.25$. The basic problem is that the covering ratio ceiling of the
only unit of size 2 in $D$ is $8.5$.

\section{Connection between the covering ratio and the covering ratio ceiling}
\label{sec3}

The covering ratio ceiling is an upper bound for the covering
ratio. It is also clear that the marginal covering ratio ceiling cannot
be large if we add a new unit to a distribution. Does this imply
something about the change in the covering ratio? 

\begin{theorem} The distribution given
in Fig.~\ref{fig:ell3} shows that the covering ratio can
increase by more than one while the covering  ratio ceiling decreases.
\end{theorem}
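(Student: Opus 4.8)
The statement is an existence claim witnessed by a single picture, so the proof is really a verification: I must read off from Fig.~\ref{fig:ell3} the two distributions it encodes — the base distribution $D$ (the blue pebbles) and the larger distribution $D'$ obtained by adding the highlighted pebbles, with $D(v)\le D'(v)$ everywhere — and then check two inequalities. The plan is to produce four numbers $\Cov(D),|D|,\Cov(D'),|D'|$ for the ordinary covering ratio and show $\frac{\Cov(D')}{|D'|}-\frac{\Cov(D)}{|D|}>1$, and separately to evaluate the two covering ratio ceilings and show the one for $D'$ is strictly smaller. I would state these as two self-contained lemmas about the specific distribution and then assemble them.

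The part I would build the argument around is a simplification of the ceiling that makes it cheap to compute on $G_{inf}$. On the infinite grid the number of vertices at distance $k$ from a fixed vertex is $1$ for $k=0$ and $4k$ for $k\ge 1$, so for every $u$ we have $\sum_{v}2^{-d(u,v)}=1+\sum_{k\ge1}4k\,2^{-k}=9$. Interchanging the order of summation gives $\sum_{v}W_D(v)=\sum_{u}D(u)\sum_{v}2^{-d(u,v)}=9|D|$ for any finite distribution. Feeding this into the definition of the covering ratio ceiling, everything except the excess on the heavy vertices cancels, and the ceiling of $D$ collapses to
$$9-\frac{\sum_{v:\,W_D(v)>1}\bigl(W_D(v)-1\bigr)}{|D|},$$
which one checks reproduces the value $8.5$ quoted earlier for a single unit of size $2$. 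Hence ``the ceiling decreases'' is exactly ``the excess per pebble $\tfrac{1}{|D|}\sum_{v:W_D(v)>1}(W_D(v)-1)$ increases'' in passing from $D$ to $D'$, a comparison of two finite sums taken over the handful of vertices carrying weight above $1$, which I would evaluate directly from the geometry of the figure by grouping vertices into distance classes around each unit.

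For the covering ratio I would pin the two coverages down from both sides: the lower bounds on $\Cov(D)$ and $\Cov(D')$ come from explicit pebbling sequences (the arrows and shaded regions in the figure exhibit them), and the matching upper bounds come from the weight function, since any reachable $v$ must satisfy $W_D(v)\ge 1$, so vertices of smaller weight are automatically excluded from the coverage. The main obstacle is precisely this bookkeeping of the weight function: identifying the exact set $\{W_D>1\}$ for the excess term is routine, but certifying the large jump in actual coverage is delicate, because on a graph with cycles the condition $W_D(v)\ge 1$ is necessary but not sufficient for reachability, so the borderline vertices driving the jump must be shown reachable by legal moves rather than by the weight bound alone. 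I would organise the coverage count by the symmetry of the configuration and treat the finitely many vertices near the added pebbles by hand; the crux is to confirm that these same added pebbles raise the excess-per-pebble (shrinking the ceiling) at the very moment they unlock enough new vertices to push the covering ratio up by more than one.
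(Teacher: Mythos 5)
Your reduction of the ceiling via $\sum_{v}2^{-d(u,v)}=9$ is correct on $G_{inf}$ (and your $8.5$ sanity check for a single unit of size $2$ is right), but the load-bearing step of your plan fails on this particular distribution. Your only tool for bounding coverage from \emph{above} is the weight criterion ``$W_D(v)<1$ implies $v$ unreachable,'' and in Fig.~\ref{fig:ell3} that criterion certifies nothing: the construction is rigged precisely so that \emph{every} vertex of the grid has weight at least $1$ already under the base distribution $D$ (vertices in a pebbled row have weight $\ge 3$, the adjacent rows $\ge 3/2$, and the middle rows $\ge 3/4+3/8=9/8$, the contributions coming from the pebbled rows above and below). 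This is the whole point of the example — fractionally $D$ already covers everything, which is why the ceiling is blind to the coverage jump — so your weight bound only yields $\Cov(D)\le$ (total number of vertices), i.e.\ a base ratio bound of about $10/3$, whereas the theorem needs the base ratio to be below $10/3-1=7/3$. The indispensable missing piece is a genuine pebbling (non-weight) argument that the rows congruent to $2,3 \pmod 5$ are unreachable under $D$: units of size $3$ spaced two apart cannot feed one another (a lone transferred pebble strands on the empty intermediate vertex), so no vertex of a pebbled row ever accumulates $4$ pebbles and nothing at distance $\ge 2$ from a pebbled row is reachable. You do note that $W_D(v)\ge 1$ is necessary but not sufficient, but you deploy the warning on the wrong side: reachability under $D'$ is the easy half (explicit moves, as you say — a size-$2$ unit at an odd end position turns its neighbouring $3$ into a $4$, which sends $2$ forward and bootstraps $4$ pebbles onto every vertex of the pebbled row, from both ends, covering everything within distance $2$), while the delicate half, unreachability under $D$, is left to a tool that is vacuous here. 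Without it your comparison cannot show that the increase exceeds $1$.

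Two secondary points of divergence from the paper's argument. First, the paper works on the finite $(5m+1)\times(2n+1)$ grid and never computes the excess at all: since all weights are already $\ge 1$, the ceiling's numerator is exactly the number of vertices and is \emph{frozen} when pebbles are added (every weight increment converts entirely into $\widehat{W}$, so the marginal ceiling is $0$), and the ceiling decreases simply because the denominator grows. Your route through ``excess per pebble increases'' is equivalent in principle but much heavier: the set $\{v: W_D(v)>1\}$ is not ``a handful of vertices'' — it is essentially the entire grid, $\Theta(nm)$ vertices. Second, your identity $\sum_v W_D(v)=9|D|$ holds only on the infinite grid; on the finite grid border pebbles contribute less than $9$, while on $G_{inf}$ both the ceiling numerator and the coverage acquire fringe terms outside the $(5m+1)\times(2n+1)$ box that you would have to control (they are $O(n+m)$, so harmless asymptotically, but this bookkeeping must be done explicitly in whichever setting you fix).
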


\begin{figure}
    \centering
    {\begin{tikzpicture}[scale=.45]

\input{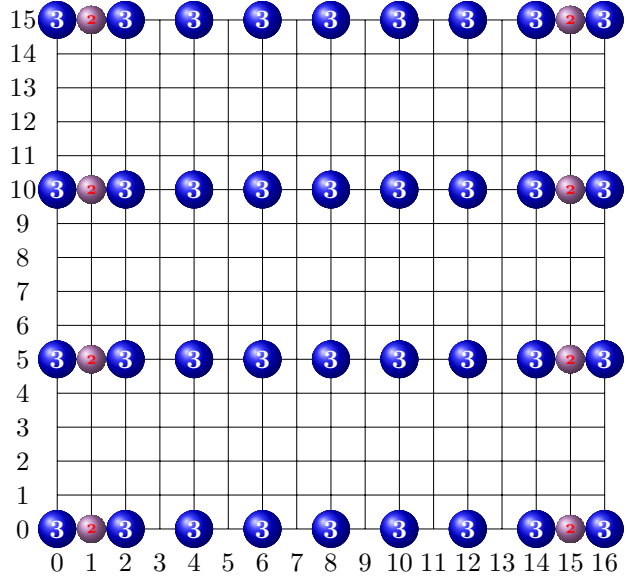}
\tikzset{koord/.style={draw}};
 \foreach \x in {0,...,15} \node  (\x) at (-1,\x) {\x};
 \foreach \x in {0,...,16} \node (\x) at (\x,-1) {\x};
\foreach \x in {0,...,16} 
\foreach \y in {0,...,15}
\coordinate (X\x_Y\y) at (\x,\y);


\draw [very thin, step=1cm] (-0,-0) grid (16,15);



\foreach \y in {0,...,3} 
	\foreach \x in  {0,...,8}	{
	\pgfmathtruncatemacro{\i}{(2*\x)}
	\pgfmathtruncatemacro{\j}{(5*\y)}
\node [pebble] (N_\x_\y) at (\i,\j) {$\mathbf{3}$};
\node [pebble2] (N_\x_\y) at (15,\j) {\tiny$\mathbf{2}$};
\node [pebble2] (N_\x_\y) at (1,\j) {\tiny$\mathbf{2}$};
}

\foreach \p in {0,...,5} {
\pgfmathtruncatemacro{\q}{(\p+1)}
}
\end{tikzpicture}}
    \caption{Pebble distribution with increasing covering ratio.}
    \label{fig:ell3}
  \end{figure}

\begin{proof}	
  Let us consider the distribution in Figure \ref{fig:ell3}. A unit of size $3$ on every second
  vertex in a row of length $2n+1$ ($n+1$ such units in a row), repeated in
  every fifth row (see
  Fig.~\ref{fig:ell3}) in $5m+1$ rows. The marginal covering ratio ceiling of a unit
   is $\frac{\Delta W-\Delta\widehat{W}}{\Delta |D|}$, where
  $\Delta W$ is the sum of the changes of the weights which all pebbles
  contributes to a vertex, $\Delta\widehat{W}$ is the same for the
  weight ceiling function, and $\Delta |D|$ is the number of added
  pebbles. 
	
  Let us first see the weights at all vertices in this
  distribution. For example the total weight for every vertex in row
  $0$ is clearly at least $3$.  Thus the total weight at every vertex in
  row $1$ is at least $3/2$. The weight for every vertex in row 2 is
  at least $3/4+3/8=9/8$, since the pebbles in row $0$ contribute at least $3/4$
  and pebbles in row $5$ contribute at least $3/8$ to its weight.  The same
  is true for all other rows, therefore it is clear that the weight of
  every vertex is at least $1$.

  If the weight of a vertex is at least $1$ then any positive change in the
  total weight will result in the same amount of positive change of
  $\widehat{W}$. So these vertices contribute $0$ to $\Delta
  W-\Delta\widehat{W}$. This implies that if further units are added
  to this distribution, then their marginal covering ratio ceiling
  will always be $0$. This calculation also shows that the covering
  ratio ceiling is $$\frac{(5m+1)(2n+1)}{3(n+1)(m+1)}<\frac{10}{3}.$$
	
  Let us calculate the covering ratio of the distribution. It is easy
  to see that each vertex in rows $0,1,4,5,6,9,10,$ $11,\ldots$ is
  reachable, but no vertex in rows $2,3,7,8,\ldots$ is
  reachable. Therefore, the covering ratio is
  $$\frac{(3m+1)(2n+1)}{3(n+1)(m+1)}<2.$$
	
  Now we add units of size $2$ near both ends of the row containing
  pebbles (the lighter, smaller pebbles on Fig.~\ref{fig:ell3}) one by
  one.  The above argument shows that the marginal covering ratio
  ceiling is $0$ in every step, and the covering ratio ceiling becomes
  $$\frac{(5m+1)(2n+1)}{3(n+1)(m+1)+4m}<\frac{(5m+1)(2n+1)}{3(n+1)(m+1)}<\frac{10}{3},$$
  so it is decreasing. On the other hand, it is easy to see that one
  can move $4$ pebbles to any vertex of row $5k$, thus every vertex of
  the grid becomes reachable. Therefore the covering ratio is also
  $$\frac{(5m+1)(2n+1)}{3(n+1)(m+1)+4m}$$ which is close to
  $\frac{10}{3}$ if $n$ and $m$ are large enough. So while the
  covering ratio ceiling is decreasing, the covering ratio is
  increasing from $2$ to $\frac{10}{3}$.
\end{proof}	
	
\section{Optimal fractional covering ratio of integer distributions}

The above arguments lead to an interesting question. What is the best
upper bound we can hope for using \cite[Lemma 14]{XY}? The idea behind
this lemma is that the fractional covering ratio is an upper bound on
the covering ratio, and if the starting pebble distribution has only
integer number of pebbles, then there must be a certain amount of excess weight. It is
easy to prove that the optimal fractional covering ratio on the grid
is $9$. The optimal distribution is obtained by placing $1/9$ pebbles
at every vertex. How does this change if we only consider integer
distributions? Let us call this the optimal \emph{integer fractional
covering ratio}, and denote it by $\ifcov(G)$. We give upper and
lower bounds for this ratio in case of the $n\times n$ grids 
.

\begin{theorem}\label{tetel} For any $\varepsilon>0$ there exists
  $n(\varepsilon)$ such that if $n>n(\varepsilon)$ then
\[ 7-\varepsilon \le {\ifcov}(G_{n\times n}) \le \frac{213}{25}=8.52.
\]
\end{theorem}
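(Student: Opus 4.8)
The plan is to first unwind the definition of the covering ratio ceiling. Since $\widehat W_D(v)=W_D(v)-1$ exactly when $W_D(v)>1$ and $\widehat W_D(v)=W_D(v)$ otherwise, the difference $W_D(v)-\widehat W_D(v)$ is $1$ at every vertex of weight strictly above one and $0$ elsewhere, so the numerator $\sum_v W_D(v)-\sum_v\widehat W_D(v)$ collapses to $|\{v:W_D(v)>1\}|$. Hence for every integer distribution $D$ the covering ratio ceiling equals $|\{v:W_D(v)>1\}|/|D|$, and $\ifcov(G_{n\times n})$ is the supremum of this ratio over integer distributions. The task therefore splits into exhibiting one economical distribution (lower bound) and showing that no distribution is too economical (upper bound). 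Throughout I would use that on the infinite grid $\sum_v W_D(v)=9|D|$, because $\sum_v 2^{-d(u,v)}=\left(\sum_{k\in\mathbb Z}2^{-|k|}\right)^2=9$; on a finite grid this sum only decreases, which is the favourable direction for both inequalities once $n$ is large enough that the $O(n)$ boundary vertices are a vanishing fraction of $n^2$.

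For the lower bound I would put a single pebble on each point of a balanced index-$7$ sublattice $\Lambda$ of $\mathbb Z^2$, for instance $\Lambda=\langle(3,1),(-1,2)\rangle$, restricted to a large $n\times n$ window. The pebble density is then $1/7$, so it suffices to prove $W_D(v)>1$ at every interior vertex: all but an $O(1/n)$ fraction of vertices are heavy while $|D|\le n^2/7+O(n)$, giving ratio at least $7-\varepsilon$ for $n>n(\varepsilon)$. The mean weight is $9/7$, so the only genuine check is that the deepest holes of $\Lambda$ — the vertices at $\ell_1$-distance $2$ from $\Lambda$, such as $(1,1)$ — still receive weight above $1$. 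This is a convergent geometric estimate summing $2^{-d}$ over the shells of $\Lambda$ around such a hole (it comes out near $1.1$), and one must confirm it for each congruence class of $\Lambda$.

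For the upper bound I would run a dual weighting argument. Since $W_D(v)$ depends on $D$ only through distances to $v$, I would seek nonnegative coefficients $(a_d)_{d\ge 0}$, one per distance, such that every integer multiset of pebbles whose total weight $\sum_d m_d 2^{-d}$ exceeds $1$ already has $a$-cost $\sum_d m_d a_d\ge 1$. Given such coefficients the pointwise inequality $[\,W_D(v)>1\,]\le\sum_u D(u)\,a_{d(u,v)}$ holds at every $v$, and summing over $v$ gives $|\{v:W_D(v)>1\}|\le\big(\sum_d N_d a_d\big)|D|$ with $N_0=1$, $N_d=4d$; the claimed bound is the value $\sum_d N_d a_d=213/25$ attained by the optimal feasible choice. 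The fractional relaxation only forces $a_d\ge 2^{-d}$ and yields $9$, so the whole improvement to $213/25$ comes from integrality: one finds $a_0=1$ (a pebble on $v$ plus one arbitrarily far tips the weight past $1$ at cost $\to a_0$), and the binding constraints are the \emph{tip-over} multisets that barely cross weight $1$ with a few pebbles at small distances, e.g.\ two pebbles at distance $1$ together with one at distance $2$.

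The main obstacle is precisely this upper bound: selecting the coefficients $(a_d)$ and verifying feasibility against the infinitely many integer tip-over configurations is a genuine integer covering program — finite in practice, once one argues that only small distances can be binding — and it is what pins down the non-round constant $213/25$. The lower bound is comparatively safe, the sole risk being an unlucky $\Lambda$ whose deep holes dip below weight $1$, which is avoided by taking the most balanced index-$7$ lattice. Finally I would reconcile the infinite-grid computation with the finite statement by observing that truncating to $G_{n\times n}$ only removes weight and only perturbs an $O(1/n)$ fraction of vertices, so both inequalities persist with arbitrarily small loss $\varepsilon$ for $n>n(\varepsilon)$.
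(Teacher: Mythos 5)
Your lower bound is essentially the paper's own construction (the paper uses the index-$7$ lattice spanned by $(1,2)$ and $(3,-1)$, places one pebble per lattice point plus $O(n)$ extra pebbles along the border, and verifies the three deep-hole classes $A,B,C$ by hand), so that half is sound modulo the hole-checking you defer. But your upper bound contains a fatal flaw: the dual weighting you propose cannot give any constant below $9$, let alone $\tfrac{213}{25}$. Feasibility requires every integer multiset of weight exceeding $1$ to have $a$-cost at least $1$, and integer distributions may stack arbitrarily many pebbles on a single vertex. Taking $2^d$ pebbles on one vertex at distance $d$ from $v$, plus one pebble at distance $D\to\infty$ (where $a_D\to 0$, as it must for $\sum_d 4d\,a_d$ to converge), yields weight $1+2^{-D}>1$ at cost $2^d a_d + a_D$, forcing $a_d \ge 2^{-d}$ for \emph{every} $d$ --- exactly the fractional lower bound. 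Hence $\sum_d N_d a_d \ge 1+\sum_{d\ge1}4d\cdot 2^{-d}=9$, and your claim that ``the whole improvement to $213/25$ comes from integrality'' is false in this formulation: a pointwise bound of the form $[\,W_D(v)>1\,]\le \sum_u D(u)\,a_{d(u,v)}$ cannot see integrality at all, because concentrated integer multisets replicate every tight fractional configuration. The paper's improvement comes from an entirely different mechanism, which your scheme cannot express: its Lemma~\ref{lem2} shows that a unit of size $k$ at $v$ forces excess weight at least $\tfrac{12}{25}k$ \emph{at $v$ itself}, by writing down the eight coverage constraints at the neighbours and diagonal neighbours of $v$ (after subtracting the contributions $\tfrac12$, resp.\ $\tfrac14$, of the pebble at $v$), grouping the remaining pebbles into regions $X_1,\dots,X_4,Y_1,\dots,Y_4$, and solving a small linear program whose optimum $\tfrac{12}{25}$ is certified by duality. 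Summing $\sum_v W_D(v)=9|D|$ against coverage plus this excess gives $9|D|\ge n^2+\tfrac{12}{25}|D|$, i.e.\ $n^2/|D|\le \tfrac{213}{25}$. This is a two-step interaction --- coverage constraints \emph{near} a pebble reflecting excess back onto the pebbled vertex --- and it is not a per-distance weighting of the kind you set up.

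A secondary issue: your opening reformulation of $\ifcov$ as $\sup_D |\{v: W_D(v)>1\}|/|D|$ does not match the quantity the paper actually bounds. The operative definition in the paper's proof is the maximum of $n^2/|D|$ over integer distributions $D$ that cover \emph{every} vertex fractionally, i.e.\ $W_D(v)\ge 1$ for all $v$. The distinction matters even for your lower bound: in the paper's lattice distribution the vertex of type $B$ has weight exactly $\tfrac24+\tfrac38+\tfrac1{16}+\tfrac2{32}=1$, so under your strict-inequality numerator those vertices would not be counted, while under the coverage definition they are. You should adopt the coverage formulation, after which your lattice construction (with border pebbles added so that coverage holds at all vertices, not just all but an $O(1/n)$ fraction) goes through as in the paper; the upper bound, however, needs the excess-weight argument, not a dual weighting.
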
 

\begin{proof} Consider the distribution given in
  Fig.~\ref{fig:frac}. It is easy to calculate, that the number of
  pebbles is $n^2/7+O(n)$, therefore if $n$ is large enough, then
  $\ifcov(G_{n\times n}) \ge 7-\varepsilon$. We need to show, that
  every vertex of the grid is covered by this distribution in the
  fractional sense, i.e. the weight of each vertex is at least
  one. This is clearly true for the vertices, where a pebble is
  placed. By the structure of the distribution, it is clear that it is
  enough to show this for the vertices marked with $A,B,C$. For $A$:
  there is $1$ pebble at distance $1$, $1$ at distance $2$, $1$ at
  distance $3$ and $3$ at distance $4$. Thus the weight at $A$ is at least $
  \frac12+\frac14+\frac18+\frac3{16}>1$. Similar calculations show
  that the weight at $B$ is at least $
  \frac24+\frac38+\frac1{16}+\frac{2}{32}=1$, and the weight at $C$ is
  at least $ \frac{1}{2}+\frac14+\frac18+\frac3{16}>1$. For vertices near
  the border, the extra pebbles placed on the border guarantees the
  required weight. Hence the lower bound is proven.

To prove the upper bound, the following Lemma is needed.
\begin{lemma}\label{lem2} If a distribution covers all vertices of the
  grid in the fractional sense and the vertex $v$ contains a unit of
  size $k$ then the excess weight at this vertex is at least
  $\frac{12}{25}k$.
\end{lemma}

\begin{proof}
  If $k>1$ then the excess weight is at least $k-1>\frac{12}{25}k$, so
  the claim clearly holds. Therefore we only have to deal with the
  $k=1$ case.  The contribution of this pebble to the weight of its
  neighbours is $\frac12$, thus the contribution of other pebbles must
  also be at least $\frac12$. Similarly, the contribution of this
  pebble is $\frac14$ to the vertices at distance 2 from $v$, thus the
  contribution of other pebbles must also be at least $\frac34$. So
  consider the distance 2 neighborhood of $v$ and partition all
  vertices of the grid according to Fig.~\ref{fig:frac-also}. For
  simplicity denote by $x_i$ ($y_i$) the total weight contribution of
  all pebbles in region $X_i$ ($Y_i$) to the corresponding
  vertex. Consider now for example vertex $x_1$. The weight from
  pebbles in $X_1$ is $x_1$, the weight from pebbles in $Y_1$ is
  clearly $\frac12 y_1$, since the distance of any pebble in $Y_1$ and
  $x_1$ is one more than the distance to $y_1$. Similarly, the weight
  from $X_2$ to $x_1$ is $\frac14x_2$, from $Y_2$ to $x_1$ is
  $\frac18y_2$, etc. Since the total weight at $x_1$ must be at least
  $1$ and the contribution of $v$ is $\frac12$,
\begin{equation*}
1\le \frac12+x_1+\frac14x_2+\frac14x_3+\frac14x_4+\frac12y_1+\frac18y_2+\frac18y_3+\frac12y_4
\end{equation*}
must hold.
For all $x_i$ and $y_i$ we can obtain similar inequalities:
\begin{align*}
1\le &\frac12+x_1+\frac14x_2+\frac14x_3+\frac14x_4+\frac12y_1+\frac18y_2+\frac18y_3+\frac12y_4\\
1\le &\frac12+\frac14x_1+x_2+\frac14x_3+\frac14x_4+\frac12y_1+\frac12y_2+\frac18y_3+\frac18y_4\\
1\le &\frac12+\frac14x_1+\frac14x_2+x_3+\frac14x_4+\frac18y_1+\frac12y_2+\frac12y_3+\frac18y_4\\
1\le &\frac12+\frac14x_1+\frac14x_2+\frac14x_3+x_4+\frac18y_1+\frac18y_2+\frac12y_3+\frac12y_4\\
1\le &\frac14+\frac12x_1+\frac12x_2+\frac18x_3+\frac18x_4+y_1+\frac1{4}y_2+\frac1{16}y_3+\frac1{8}y_4\\
1\le &\frac14+\frac18x_1+\frac12x_2+\frac12x_3+\frac18x_4+\frac1{4}y_1+y_2+\frac1{4}y_3+\frac1{16}y_4\\
1\le &\frac14+\frac18x_1+\frac18x_2+\frac12x_3+\frac12x_4+\frac1{16}y_1+\frac1{4}y_2+y_3+\frac1{4}y_4\\
1\le &\frac14+\frac12x_1+\frac18x_2+\frac18x_3+\frac12x_4+\frac1{4}y_1+\frac1{16}y_2+\frac1{4}y_3+y_4
\end{align*}
The vertex $v$ contains $1$ pebble, so the weights coming from other
vertices will give excess weight at $v$. This excess weight is
$\frac12(x_1+x_2+x_3+x_4)+\frac14(y_1+y_2+y_3+y_4)$. To determine the
minimum value of the excess weight, so that the above inequalities are
satisfied, a linear program can be solved. Using duality, one can
easily verify that this minimum is $\frac{12}{25}$. (The minimum is
taken when $x_i=0$ and $y_i=\frac{12}{25}$.)
\end{proof}

\begin{figure}
    \centering
    {\begin{tikzpicture}[scale=.45]

\input{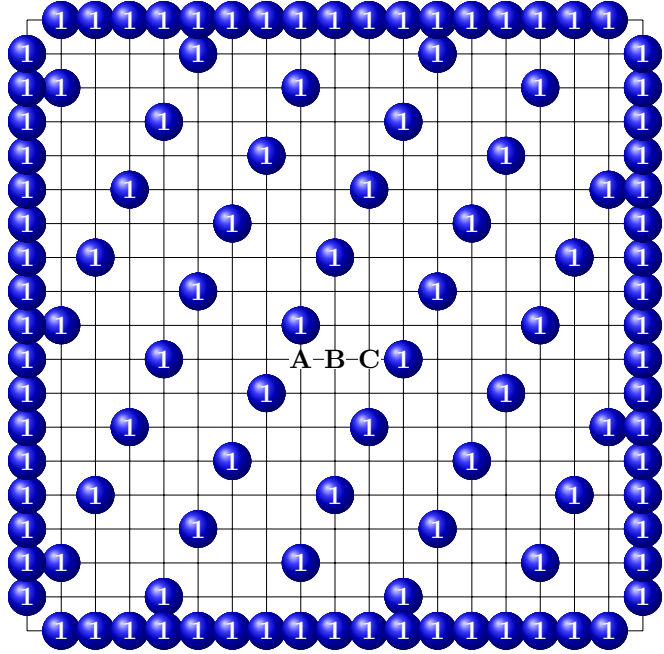}

\foreach \x in {0,...,25} 
\foreach \y in {0,...,14}
\coordinate (X\x_Y\y) at (\x,\y);

%
\draw [very thin, step=1cm] (0,0) grid (18,18);



\foreach \x in {1,...,13} 
	\foreach \y in  {-2,...,4}  {
	\pgfmathtruncatemacro{\i}{(\x+3*\y)}
	\pgfmathtruncatemacro{\j}{(2*\x-\y
	)}
\ifnum \i>0
\ifnum \j<18
\ifnum \i<18
\ifnum \j>0
\node [pebble] (N_\x_\y) at (\i,\j) {$\mathbf{1}$}
\fi\fi\fi\fi;
}
\foreach \x in {1,...,17} {
\node [pebble] (Na_\x) at (\x,0) {$\mathbf{1}$};
\node [pebble] (Na_\x) at (\x,18) {$\mathbf{1}$};
}
\foreach \y in {1,...,17} {
\node [pebble] (Na_\y) at (0,\y) {$\mathbf{1}$};
\node [pebble] (Na_\y) at (18,\y) {$\mathbf{1}$};
}
\node [tipus] (plus) at (8,8) {$\mathbf{A}$};
\node [tipus] (plus) at (9,8) {$\mathbf{B}$};
\node [tipus] (plus) at (10,8) {$\mathbf{C}$};

\foreach \p in {0,...,5} {
\pgfmathtruncatemacro{\q}{(\p+1)}
}
\end{tikzpicture}}
    \caption{Pebble distribution with $7-\varepsilon \le \ifcov(G_{n\times n})$  }
    \label{fig:frac}
  \end{figure}

  Let $D$ be any pebble distribution. An easy calculation shows
  that the sum of the weight contributions of a single pebble to all
  other vertices on the grid is $9$, therefore the sum of weights for
  every vertex on the grid cannot exceed $9|D|$. If $D$ covers every
  vertex on the grid then the weight at every vertex is at least $1$,
  however, if the vertex contains $k$ pebbles then the excess weight
  on this vertex is at least $\frac{12}{25}k$ by
  Lemma~\ref{lem2}. Hence $9|D|\ge n^2+\frac{12}{25}|D|$ holds. This
  implies that $\frac{n^2}{|D|}\le
  9-\frac{12}{25}=\frac{213}{25}=8.52$, proving our claim.
\end{proof}

\begin{figure}
    \centering
    {\begin{tikzpicture}[scale=.5]

\input{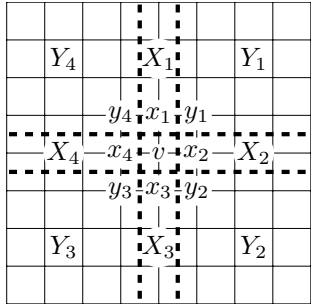}

 \clip (5,5) rectangle (13,13);
\tikzset{koord/.style={draw}};
 \foreach \x in {0,...,18} \node  (\x) at (-1,\x) {\x};
 \foreach \x in {0,...,18} \node (\x) at (\x,-1) {\x};
\foreach \x in {0,...,25} 
\foreach \y in {0,...,14}
\coordinate (X\x_Y\y) at (\x,\y);

\draw [very thin, step=1cm] (0,0) grid (18,18);



\node [csucs] (v) at (9,9) {${v}$};
\node [csucs] (x1) at (9,10) {${x_1}$};
\node [csucs] (x2) at (10,9) {${x_2}$};
\node [csucs] (x3) at (9,8) {${x_3}$};
\node [csucs] (x4) at (8,9) {${x_4}$};
\node [csucs] (y1) at (10,10) {${y_1}$};
\node [csucs] (y2) at (10,8) {${y_2}$};
\node [csucs] (y3) at (8,8) {${y_3}$};
\node [csucs] (y4) at (8,10) {${y_4}$};
\node [csucs] (X1) at (9,11.5) {${X_1}$};
\node [csucs] (X2) at (11.5,9) {${X_2}$};
\node [csucs] (X3) at (9,6.5) {${X_3}$};
\node [csucs] (X4) at (6.5,9) {${X_4}$};
\node [csucs] (Y1) at (11.5,11.5) {${Y_1}$};
\node [csucs] (Y2) at (11.5,6.5) {${Y_2}$};
\node [csucs] (Y3) at (6.5,6.5) {${Y_3}$};
\node [csucs] (Y4) at (6.5,11.5) {${Y_4}$};

\foreach \p in {0,...,5} {
\pgfmathtruncatemacro{\q}{(\p+1)}
}
\draw [szaggatott](8.5,18) -- (8.5,0);
\draw [szaggatott](9.5,18) -- (9.5,0);
\draw [szaggatott](0,9.5) -- (18,9.5);
\draw [szaggatott](0,8.5) -- (18,8.5);
\end{tikzpicture}}
    \caption{Proof of the upper bound}
    \label{fig:frac-also}
  \end{figure}

  Theorem~\ref{tetel} implies that the best upper bound for the
  (integer) covering ratio we can hope using 
the approach of integer fractional covering is $7$.

\section*{Acknowledgement}

The first and second authors are partially supported by the National Research, Development and Innovation Office
NKFIH (grant number K116769).
The second and third authors are partially supported by the National Research, Development and Innovation Office
NKFIH (grant number K108947).

\end{document}